\documentclass[11pt]{article}

\usepackage[T1]{fontenc}
\usepackage[utf8]{inputenc}
\usepackage{authblk}

\usepackage{amssymb,amsmath,amsthm}
\usepackage[colorlinks=true,urlcolor=blue,
citecolor=red,linkcolor=blue,linktocpage,pdfpagelabels,bookmarksnumbered,bookmarksopen]{hyperref}
\usepackage{graphicx}
\usepackage[english]{babel}
\usepackage{epsfig}
\usepackage{graphics}
\usepackage{amsthm}
\theoremstyle{plain}

\topmargin     -.3cm
\textheight     22cm
\textwidth      14cm
\evensidemargin  1.1cm
\oddsidemargin   1.1cm

\newtheorem{remark}{Remark}
\newtheorem{lemma}{Lemma}[section]
\newtheorem{theorem}[lemma]{Theorem}

\theoremstyle{definition}

\newtheorem*{conj*}{Conjecture}

\newtheorem{prop}[lemma]{Proposition}

\newcommand{\R}{{\mathbb R}}

\newcommand{\iy}{\infty}

\newcommand{\DR}{{\cal D}}

\newcommand{\bn}{\bigbreak\noindent}

\def\bean#1\eean{\begin{eqnarray*}#1\end{eqnarray*}}


\begin{document}

\title{A Liouville theorem for the $p$-Laplacian and related questions}
\author[1]{Alberto Farina\thanks{alberto.farina@u-picardie.fr}}
\author[2]{Carlo Mercuri\thanks{c.mercuri@swansea.ac.uk}}
\author[3]{Michel Willem\thanks{michel.willem@uclouvain.be}}

\affil[1]{LAMFA, CNRS UMR 7352, Facult\'e des Sciences, Universit\'e de Picardie Jules Verne, 33, Rue Saint-Leu, 80039 Amiens Cedex 1, France}
\affil[2]{Department of Mathematics, Computational Foundry, Swansea University, Fabian Way, Swansea, SA1~8EN, UK}
\affil[3]{D\'epartement de Math\'ematiques, Universit\'e catholique de Louvain, Chemin du Cyclotron, 2, B-1348, Louvain-la-Neuve, Belgium}
\renewcommand\Authands{ and }

\date{}


\maketitle

\begin{abstract}
We prove several classification results for $p$-Laplacian problems on bounded and unbounded domains, and deal with qualitative properties of sign-changing solutions to $p$-Laplacian equations on $\R^N$ involving critical nonlinearities. Moreover, on radial domains we characterise the compactness of possibly sign-changing Palais-Smale sequences. 
\\{\sc Keywords:} Nonexistence, blow-up analysis, global compactness, Liouville-type theorems, Palais-Smale sequences.\tableofcontents

\end{abstract}
\thanks{\it 2010 Mathematics Subject
 Classification: 35J92 (35B33, 35B53, 35B38) }
 

\section{Introduction}
Throughout the paper we use the following notation:
$$
\begin{array}{l}
\Delta_pu:={\rm div}(|\nabla u|^{p-2}\nabla u), \quad 1<p<\infty,\\
\\
p^*:=N p/(N-p),\quad 1<p<N,\\
\\
\mathcal D^{1,p}(\R^N)=\{u\in L^{p^*}(\R^N):\nabla u\in L^p(\R^N;\R^N)\},\\
\\
||u|| =||\nabla u||_{L^p(\R^N)},\\
\\
H=\R^N_+=\{x\in\R^N:x_N>0\}.
\end{array}
$$
For a smooth possibly unbounded domain $\mathcal O$ we denote by $\mathcal D_0^{1,p}(\mathcal O)$ the closure of $\mathcal D (\mathcal O)$ in $\mathcal D^{1,p}(\R^N)$. When $\mathcal O$ is bounded we set $\mathcal D_0^{1,p}(\mathcal O)=W_0^{1,p}(\mathcal O).$\\
Let $1<p<N,$ $\Omega$ be a smooth bounded domain of $\mathbb R^N$ and $a\in L^{N/p}(\Omega).$ 
We define  on $W^{1,p}_0(\Omega)$ $$\phi(u)=\int_{\Omega}\Big(\frac{|\nabla u|^{p}}{p}+a(x)\frac{|u|^p}{p}-\frac{|u|^{p^*}}{p^*}\Big)\textrm{d}x,$$ and on $\mathcal D^{1,p}(\R^N)$

 $$\phi_\infty(u)=\int_{\R^N}\Big(\frac{|\nabla u|^{p}}{p}-\frac{|u|^{p^*}}{p^*}\Big)\textrm{d}x.
 $$
Recall that $$(\phi'(u),h)= \int_{\Omega}[|\nabla u|^{p-2}\nabla u \cdot \nabla h+a(x)|u|^{p-2}u \, h-|u|^{p^*-2} u \,h]\textrm{d}x,$$

$$(\phi_{\infty}'(u),h)= \int _{\R^N}[|\nabla u|^{p-2}\nabla u\cdot \nabla h- |u|^{p^*-2}u\,h]\textrm{d}x.$$
From \cite{MW} a blow-up theory for the Palais-Smale sequences $\{u_n\}_n \subset W^{1,p}_{0} (\Omega)$ for $\phi$ is available when $\{u_n\}_n$ is a bounded sequence `nearby' the positive cone of $W^{1,p}_{0} (\Omega).$ 
We assume here that

 $$\phi(u_n)\rightarrow c \quad \quad \phi'(u_n)\rightarrow 0 \quad \textrm{in} \,\,W^{-1,p'}(\Omega) $$ and
 $$\|(u_n)_-\|_{L^{p^*}(\Omega)}\rightarrow 0, \quad n\rightarrow \infty.
 $$
 Then, a $p$-Laplacian generalisation of Struwe's global compactness result, see e.g. \cite{MaWi, Struwe, Will} holds. In fact by \cite{MW}, passing if necessary to a subsequence, there exists a possibly nontrivial solution $v_0\in W^{1,p}_{0}(\Omega)$ to
\begin{eqnarray*}
-\Delta_p u +a(x) u^{p-1}=  u^{p^*-1} &\textrm{in}& \Omega, \\
u \geq 0 & \textrm{in} & \Omega,
\end{eqnarray*}
$k$ possibly nontrivial solutions $\{v_1,...,v_k\}\subset \mathcal D^{1,p}(\R^N)$ to
\begin{eqnarray*}
-\Delta_p u = u^{p^*-1} &\textrm{in}& \R^N, \\
u \geq 0 & \textrm{in} & \R^N,
\end{eqnarray*}
 and $k$ sequences $\{y^i_n\}_n \subset \Omega$ and $\{\lambda^i_n\}_n \subset \R_+,$  such that

$$\frac{1}{\lambda^i_n}\, \textrm{dist} \, (y^i_n,\partial\Omega)\rightarrow \infty , \,\quad n\rightarrow \infty, $$

$$\|u_n-v_0-\sum^k_{i=1}(\lambda^i_n)^{(p-N)/p}v_i ((\cdot-y^i_n)/\lambda^i_n)\|\rightarrow 0, \quad n\rightarrow \infty,$$

$$\|u_n\|^p\rightarrow \sum^k_{i=0}\|v_i\|^p, \quad n\rightarrow \infty,$$

$$\phi(v_0)+\sum^k_{i=1}\phi_\infty (v_i)=c.$$
Recent symmetry results of Sciunzi \cite{S}  (see also V\'etois \cite{Vet}) together with the uniqueness of the radial positive solutions to $-\Delta_p u = u^{p^*-1}$ on $\R^N$  (see Guedda-Veron \cite{GV2}), allow to prove that the limiting profiles $\{v_1,...,v_k\}$ are given by the classical Talenti functions (\cite{T}). Among the various applications of this result, it is worth mentioning \cite{MSS}, which extends to the case $p\neq 2$ the classical result of Coron \cite{coron}.\\
When considering arbitrary sign-changing Palais-Smale sequences the scenario is much richer. In fact, one may have $$\liminf_{n\rightarrow \infty}\frac{1}{\lambda^i_n}\, \textrm{dist} \, (y^i_n,\partial\Omega)< \infty $$ and as a consequence some of the limiting functions $v_i$ may live on a half-space. Ruling out that this situation may occur would yield a complete generalisation of Struwe's result for the $p$-Laplace operator. More precisely, one may conjecture the following Liouville-type theorem
\begin{conj*}\label{Halfspace}
{\it Let $u\in \DR^{1,p}_0(\R_+^N)$ be a weak solution of the equation

\begin{equation}\label{imp}
-\Delta_p u =   |u|^{p^*-2}u \quad \textrm{in}\quad \R_+^N.
\end{equation}  Then $u\equiv 0.$}
\end{conj*}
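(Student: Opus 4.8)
The plan is to exploit the finite-energy hypothesis $u\in\DR^{1,p}_0(\R_+^N)$ together with a monotonicity property in the $x_N$-direction produced by the method of moving planes, deriving a contradiction from the fact that a nontrivial solution which is nondecreasing in $x_N$ cannot be $p^*$-integrable on $\R_+^N$. I would first reduce to the model case of a \emph{nonnegative} solution and set up the regularity and decay needed to run the method. By the interior and boundary $C^{1,\alpha}$ regularity theory for the $p$-Laplacian (DiBenedetto, Tolksdorf, Lieberman) any weak solution is $C^{1,\alpha}$ up to the flat boundary $\{x_N=0\}$, where $u$ vanishes; a De Giorgi--Nash--Moser iteration based on the finite energy $\int_{\R_+^N}(|\nabla u|^p+|u|^{p^*})\,dx<\infty$ yields local boundedness and decay $u(x)\to 0$ as $|x|\to\infty$. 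If $u\ge 0$ and $u\not\equiv 0$, then V\'azquez's strong maximum principle and the Hopf lemma give $u>0$ in the open half-space.

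Next I would run the moving plane method along the $x_N$-axis. Writing $u_\lambda(x',x_N)=u(x',2\lambda-x_N)$ for the reflection across $T_\lambda=\{x_N=\lambda\}$ and comparing $u$ with $u_\lambda$ on the strip $\{0<x_N<\lambda\}$, I would invoke the weak comparison principle for the $p$-Laplacian in the form developed by Damascelli and Sciunzi --- the machinery underlying the symmetry result \cite{S} --- to start the procedure near the boundary and then slide the plane to every $\lambda>0$. The finite-energy decay controls $u$ and $u_\lambda$ at infinity and lets the unbounded reflected region be treated. The outcome is the monotonicity $\partial_{x_N}u\ge 0$ throughout $\R_+^N$.

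Monotonicity then closes the argument. If $u\not\equiv 0$, pick $\bar x=(\bar x',\bar x_N)$ with $u(\bar x)=c>0$; by continuity $u\ge c/2$ on a set $\omega\times(\bar x_N-\delta,\bar x_N+\delta)$ with $\omega$ a ball around $\bar x'$, and $\partial_{x_N}u\ge 0$ forces $u(x',t)\ge c/2$ for all $x'\in\omega$ and all $t\ge\bar x_N$. Hence $\int_{\R_+^N}|u|^{p^*}\,dx\ge (c/2)^{p^*}|\omega|\int_{\bar x_N}^{\infty}dt=+\infty$, contradicting $u\in L^{p^*}(\R_+^N)$. Therefore $u\equiv 0$.

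The main obstacle is twofold. The degeneracy of the $p$-Laplacian forces one to handle the comparison principle across the critical set $\{\nabla u=0\}$ and to control the solution uniformly at infinity at the critical growth rate, which is precisely where the finite-energy decay estimates and the Sciunzi-type weak comparison principle are indispensable; making the planes slide all the way up, rather than merely start, is the delicate point even in the nonnegative case. The genuinely hard issue, however, is the \emph{sign-changing} case of the statement: for a solution of $-\Delta_p u=|u|^{p^*-2}u$ that changes sign the moving plane method produces no monotonicity and the route above collapses. One natural attempt is to reflect $u$ oddly across $\{x_N=0\}$, obtaining a finite-energy solution $\tilde u$ of the critical equation on all of $\R^N$ that is antisymmetric in $x_N$; the crux then becomes showing that no nontrivial finite-energy solution on $\R^N$ can vanish on an entire hyperplane. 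Such a classification of antisymmetric solutions does not appear to follow from the existing symmetry and uniqueness results, and I expect it to be the central difficulty.
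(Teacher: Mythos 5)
There is a genuine gap, and in fact you have named it yourself in your final paragraph: your argument never handles sign-changing solutions, and the statement you are asked to prove makes no sign assumption. The opening sentence ``I would first reduce to the model case of a nonnegative solution'' is not a reduction at all --- no such reduction is carried out or available. Everything that follows (moving planes, monotonicity in $x_N$, the incompatibility of $\partial_{x_N}u\ge 0$ with $u\in L^{p^*}(\R^N_+)$) applies only to nonnegative solutions, and that case is already settled in \cite{MW} by a different route: one shows via a local Pohozaev identity that $\partial_N u$ vanishes on $\partial\R^N_+$, extends $u$ by zero to all of $\R^N$, and contradicts V\'azquez's strong maximum principle. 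You should also be aware that the statement is presented in the paper as a \emph{conjecture}, explicitly left open; the authors prove it only under additional hypotheses --- nonnegativity (quoting \cite{MW}) or Morse index at most one (Theorem \ref{Halfspace0}), the latter using the bound $\sharp\mathcal N_u\le i(u)$ of Theorem \ref{morse theorem} to force a single nodal domain and then applying the vanishing of the normal derivative (Lemma \ref{lem}, which they do establish for sign-changing solutions) together with Hopf's lemma on that nodal domain. The obstruction they identify is precisely the absence of a unique continuation principle for the $p$-Laplacian: for a sign-changing $u$ the zero extension to $\R^N$ still solves the equation, but one cannot conclude $u\equiv 0$ from its vanishing on the lower half-space.

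Two further cautions on the part you do sketch. First, the weak comparison principle needed to start and slide the planes for $p\neq 2$ in an unbounded strip is only known under smallness or integrability conditions on the comparison region and degenerates on the critical set $\{\nabla u=0\}$; asserting that the Damascelli--Sciunzi machinery ``lets the unbounded reflected region be treated'' is exactly the hard technical content, not a citation one can wave at. Second, the pointwise decay $u(x)\to 0$ as $|x|\to\infty$ does not follow from finite energy alone by a routine Moser iteration at the critical exponent; it requires the more delicate a priori estimates of the type proved by V\'etois \cite{Vet}. Neither of these, however, is the decisive defect: the decisive defect is that the sign-changing case --- the entire content of the conjecture beyond what is already in the literature --- is left untouched.
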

In \cite{MW} it has been shown that this conjecture is true under the additional assumption $u\geq 0.$ In a more delicate regularity setting, following \cite{EL} the proof in \cite{MW} consists in showing that the normal derivative of a nontrivial solution vanishes along the boundary of $\R_+^N.$ Therefore $u$ extends by zero to a solution on $\R^N,$ contradicting the strong maximum principle \cite{V,PuSe}.  However, when dealing with the $p$-Laplacian operator, a unique continuation principle seems to be a major open question. \\
\subsection{Main results}Among the main results of the present paper we have an a priori quantitative bound on the number of nodal regions for the solutions  to (\ref{imp}). More precisely we have the following general result for possibly unbounded domains.
Hereafter we refer to the Sobolev constant as defined as
\begin{equation}\label{best}
S=S(N,p):=\inf\Big\{\int_{\R^N}|\nabla u|^p \textrm{d}x,\, u\in {\mathcal D}^{1,p}(\R^N)\, : \, \int_{\R^N}|u|^{p^*} \textrm{d}x =1\Big\},
\end{equation}
achieved on functions 

$$U_{\lambda, x_0}:=  \Big[\frac{\lambda^{\frac{1}{p-1}}N^{\frac{1}{p}}(\frac{N-p}{p-1})^{\frac{p-1}{p}}} {\lambda^{\frac{p}{p-1}}+|\cdot-x_0|^{\frac{p}{p-1} }}\Big]^{\frac{N-p}{p}} , \quad\lambda\in\R_+, x_0\in \R^N,
$$
see \cite{T} .

\begin{theorem}\label{prep}
Let $1<p<N,$ let $\mathcal O$ be a smooth domain of $\R^N$ and $u\in \DR_0^{1,p}(\mathcal O)$ be a solution to the equation

\begin{equation}\label{eqspace}
-\Delta_p u =   |u|^{p^*-2}u \quad \textrm{in}\quad \mathcal D'(\mathcal O).
\end{equation}  
Then
\begin{itemize}
  \item [] $i)$ for every nodal domain $\omega$ of $u$ it holds that $$\int_\omega|\nabla u|^p=\int_\omega|u|^{p^*};$$
  \item [] $ii)$ if $u\in \DR^{1,p}_0(\mathcal O)\setminus \{0\}$ then $u$ has at most a finite number of nodal domains. More precisely let $\mathcal N_u$ be the set of nodal domains of $u$ and $\sharp \mathcal N_u$ its cardinality, it holds that
$$
   \sharp \mathcal N_u\leq S(N,p)^{-N/p}\int_{\mathcal O}|u|^{p^*}
$$
 where $S(N,p)$ is the best Sobolev constant defined in (\ref{best}). 
  \end{itemize}  
\end{theorem}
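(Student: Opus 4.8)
The plan is to localise the weak formulation of (\ref{eqspace}) to each nodal domain and then to feed the resulting energy identity into the Sobolev inequality (\ref{best}). First I would invoke the standard regularity theory for the critical $p$-Laplacian (see e.g. \cite{GV2}), which gives $u\in C^{1,\alpha}_{loc}$, hence $u$ continuous. Then the set $\{x\in\mathcal O:u(x)\neq0\}$ is open, its connected components are the nodal domains $\omega\in\mathcal N_u$, and on the relative boundary $\partial\omega\cap\mathcal O$ the function $u$ vanishes. For a fixed $\omega$ I set $u_\omega:=u\,\mathbf 1_\omega$; because $u$ is continuous and vanishes on $\partial\omega\cap\mathcal O$, one has $u_\omega\in\DR^{1,p}_0(\mathcal O)$ with $\nabla u_\omega=(\nabla u)\mathbf 1_\omega$, so $u_\omega$ is an admissible test function.

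To prove $i)$ I would simply insert $h=u_\omega$ into the weak formulation
$$\int_{\mathcal O}|\nabla u|^{p-2}\nabla u\cdot\nabla h\,\textrm{d}x=\int_{\mathcal O}|u|^{p^*-2}u\,h\,\textrm{d}x.$$
Since $\nabla u_\omega$ and $u_\omega$ are supported in $\omega$ and coincide there with $\nabla u$ and $u$ respectively, the left-hand side collapses to $\int_\omega|\nabla u|^p$ and the right-hand side to $\int_\omega|u|^{p^*}$, which is exactly the claimed identity.

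For $ii)$ I would apply the Sobolev inequality (\ref{best}) to $u_\omega$, viewed as an element of $\mathcal D^{1,p}(\R^N)$ after extension by zero, to get $S\big(\int_\omega|u|^{p^*}\big)^{p/p^*}\leq\int_\omega|\nabla u|^p$. Combining this with the identity from $i)$ and writing $a_\omega:=\int_\omega|u|^{p^*}>0$ gives $S\,a_\omega^{p/p^*}\leq a_\omega$, that is $S\leq a_\omega^{1-p/p^*}=a_\omega^{p/N}$ using the exponent identity $1-p/p^*=p/N$; hence each nodal domain carries mass $a_\omega\geq S^{N/p}$. As the nodal domains are pairwise disjoint, summing over $\omega\in\mathcal N_u$ yields
$$\sharp\mathcal N_u\,S^{N/p}\leq\sum_{\omega\in\mathcal N_u}\int_\omega|u|^{p^*}\leq\int_{\mathcal O}|u|^{p^*},$$
which is the asserted bound; in particular $\sharp\mathcal N_u$ is finite since $\int_{\mathcal O}|u|^{p^*}<\infty$.

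The hard part will be the one functional-analytic point flagged above, namely verifying that $u_\omega=u\,\mathbf 1_\omega\in\DR^{1,p}_0(\mathcal O)$ is genuinely admissible (and that its gradient is $(\nabla u)\mathbf 1_\omega$), which I would handle through the usual truncation and componentwise lemmas for Sobolev functions together with the continuity of $u$. Once this is established, the remainder is an elementary manipulation of the Sobolev inequality and the disjointness of the nodal domains.
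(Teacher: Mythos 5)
Your proposal is correct and follows essentially the same route as the paper: test the equation with the restriction of $u$ to a nodal domain to get the energy identity $i)$, then combine it with the Sobolev inequality and disjointness of the nodal domains to get $ii)$. The one point you flag as "the hard part" -- justifying that $u\,\mathbf 1_\omega$ is admissible with gradient $(\nabla u)\mathbf 1_\omega$ -- is precisely what the paper's Lemma \ref{approx} supplies, via the truncation $f_n(t)=\tfrac1n f(nt)$ (which forces the support into $\{u\ge 1/n\}\subset\omega$) combined with a spatial cutoff, after which one passes to the limit in the weak formulation rather than testing with $u\,\mathbf 1_\omega$ directly.
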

\noindent As a consequence of the above theorem we have two propositions in a radially symmetric setting, which are of independent interest. \\ 
For $R,\mu>0$ consider the radial problem
\begin{equation}\label{radial bounded}
\left\{\begin{array}{l}
-\Delta_pu=\mu |u|^{p^*-2}u\quad \mbox{~in~}B(0,R)\subset\R^N,\\
u\in W_0^{1,p}(B(0,R)).
\end{array}
\right.
\end{equation}
For $p=2$, 0 is the only solution by the unique continuation principle. When $\frac{2N}{N+2}\leq p\leq2$, 0 is the only radial solution, see \cite{MW} p. 482. 
 Following a different method, we can now improve this nonexistence result for all $p\in (1,N).$  Set $B=B(0,R).$ We have the following
 \begin{prop}\label{nonexx}
 {\it Let $1<p<N,$  and let $u\in W^{1,p}_0(B)$ be a possibly sign-changing radial weak solution to the equation (\ref{radial bounded}). Then $u\equiv 0.$} \end{prop}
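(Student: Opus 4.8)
The plan is to exploit the radial symmetry to reduce the problem to an ODE on $(0,R)$ and then combine two pieces of information: the Pohozaev-type integral identity furnished by Theorem~\ref{prep}$(i)$ applied on each nodal annulus, together with a boundary-regularity argument at the origin and at $r=R$. Since $u$ is radial, writing $u=u(r)$ with $r=|x|$, the nodal domains are concentric annuli $\{r_{k-1}<r<r_k\}$ (with $r_0=0$ and the last endpoint $r_m=R$), and on each such annulus $u$ has a definite sign and vanishes on the bounding spheres.

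First I would recall that by Theorem~\ref{prep}$(ii)$ the number of nodal domains of $u$ is finite, so there are only finitely many sign changes $0=r_0<r_1<\dots<r_m=R$. The key mechanism I expect to drive the nonexistence is a \emph{Pohozaev obstruction on each annulus}. On a single annulus $\omega_k=\{r_{k-1}<|x|<r_k\}$ the function $u$ is a solution of $-\Delta_p u=\mu|u|^{p^*-2}u$ vanishing on $\partial\omega_k$, and the classical Pohozaev (Rellich) identity for the $p$-Laplacian, obtained by testing the equation against $x\cdot\nabla u$, reads schematically
\begin{equation*}
\Bigl(\frac{N-p}{p}-\frac{N}{p^*}\Bigr)\int_{\omega_k}|\nabla u|^p
=\frac{1}{p}\int_{\partial\omega_k}(x\cdot\nu)\,|\nabla u|^p\,\mathrm{d}\sigma,
\end{equation*}
where the bulk coefficient on the left vanishes identically because $p^*=Np/(N-p)$. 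Hence the entire content of the identity is the boundary term on the right: the flux of $|\nabla u|^p$ weighted by $x\cdot\nu$ over the two bounding spheres must vanish. Because $u$ is radial and $\nu=\pm x/|x|$ on the two spheres, $x\cdot\nu=\pm r$, so this forces a sign-definite combination $r_k^{N}|u'(r_k)|^p$ and $r_{k-1}^{N}|u'(r_{k-1})|^p$ (with opposite signs coming from the two orientations of $\nu$) to balance. Summing over all annuli, the interior contributions telescope and I expect to be left with a boundary condition at $r=0$ and at $r=R$; the outer sphere contributes $R^{N}|u'(R)|^p$ and the origin contributes nothing (by integrability of $\nabla u\in L^p$ and radial regularity at $0$). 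This should force $u'(R)=0$.

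Granting $u'(R)=0$ together with $u(R)=0$, the plan is then to \emph{extend $u$ by zero} outside $B$ and argue that the extension is a global solution on $\R^N$ of $-\Delta_p u=\mu|u|^{p^*-2}u$, which by the comparison/uniqueness theory (the positive-cone case of the Conjecture, proved in \cite{MW}, applied on each nodal region, or directly the strong maximum principle of \cite{V,PuSe}) cannot be nontrivial—mirroring the half-space argument sketched in the introduction. Alternatively, and perhaps more cleanly in the radial ODE setting, once both $u(R)=0$ and $u'(R)=0$ hold one can run a backward Cauchy–Lipschitz / uniqueness argument for the radial $p$-Laplacian ODE $-(r^{N-1}|u'|^{p-2}u')'=\mu r^{N-1}|u|^{p^*-2}u$ from $r=R$ inward, whose only solution with trivial Cauchy data at a regular point is $u\equiv0$.

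\textbf{The main obstacle} I anticipate is twofold. The first difficulty is justifying the Pohozaev identity and its boundary terms rigorously: weak solutions of the $p$-Laplacian have only $C^{1,\alpha}$ regularity away from the critical set $\{\nabla u=0\}$, so the pointwise boundary flux $|\nabla u|^p$ on each sphere, and the validity of the integration by parts used to derive the Rellich–Pohozaev identity, need the regularity theory of \cite{S,Vet} and careful handling near points where $u'=0$. The second, and I think genuinely delicate, point is extracting $u'(R)=0$ from the telescoped boundary identity: one must ensure that the sign-definite interior flux terms do not conspire to cancel in a way that leaves $u'(R)$ free, and that the contribution at $r=0$ truly vanishes. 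This is where the criticality $p^*=Np/(N-p)$—which kills the interior Pohozaev term and reduces everything to boundary data—is essential, and it is exactly the step that distinguishes the critical exponent from the subcritical case.
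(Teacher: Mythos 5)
Your strategy rests on the same two pillars as the paper's proof---the critical Pohozaev identity, which annihilates the interior term and leaves only a boundary flux, and Hopf's boundary point lemma---but the execution differs. You apply Pohozaev on each nodal annulus $\{r_{k-1}<|x|<r_k\}$ and telescope outward from the origin; note that this in fact gives $r_k^N|u'(r_k)|^p=r_{k-1}^N|u'(r_{k-1})|^p$ on every annulus, hence $u'(r_k)=0$ for \emph{every} $k$ (the innermost region contributes no inner boundary term), so you could contradict Hopf's lemma already at the first interface $r_1$ and never need to reach $r=R$, extend by zero, or invoke the strong maximum principle. The paper is more economical: it first rules out signed solutions (strong maximum principle plus the star-shaped nonexistence result of \cite{GV}), so $u$ must possess an annular nodal region $A=\{R_1<|x|<R_2\}$ with $R_1>0$, and then applies Pohozaev \emph{once}, on the full sub-ball $B(0,R_2)$---on which $u$ is still a $W_0^{1,p}$ solution and which is strictly star-shaped, so $x\cdot\nu=R_2>0$ on the whole bounding sphere. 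The vanishing of $\int_{\partial B(0,R_2)}(x\cdot\nu)|\nabla u|^p\,\mathrm{d}\sigma$ then forces $\nabla u=\mathbf{0}$ on $|x|=R_2$ in one stroke, contradicting Hopf in $A$. This avoids both the telescoping and the annulus version of the identity (whose two boundary spheres carry opposite signs of $x\cdot\nu$); moreover your telescoping genuinely needs the finiteness statement of Theorem \ref{prep}$(ii)$ as well as a word about possible ``fat'' components of the zero set (there $C^{1}$ regularity gives $u'=0$ on their boundary, so the chain restarts harmlessly), whereas the paper uses finiteness only peripherally.

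Two cautions. First, Theorem \ref{prep}$(i)$ is the identity $\int_\omega|\nabla u|^p=\int_\omega|u|^{p^*}$ obtained by testing with $u$ itself; it is \emph{not} the Pohozaev identity you actually use, which comes from testing with $x\cdot\nabla u$ and must be justified for $C^{1,\alpha}$ weak solutions by the regularisation of \cite{GV} and \cite{Di} (as in Section \ref{normal section}), not by \cite{S,Vet}. Second, your ``cleaner'' alternative endgame---backward Cauchy--Lipschitz uniqueness for the radial ODE from the trivial data $u(R)=u'(R)=0$---does not work in the full range $1<p<N$: the nonlinearity $|u|^{p^*-2}u$ fails to be Lipschitz at $0$ when $p<2N/(N+2)$, and the degeneracy of $s\mapsto|s|^{p-2}s$ is a further obstruction; this is precisely why the earlier argument of \cite{MW} was confined to $2N/(N+2)\le p\le 2$, the restriction this proposition is meant to remove. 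Conclude instead with Hopf's lemma, or with extension by zero and the strong maximum principle of \cite{V}.
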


\begin{prop} \label{nonexxx}
{\it Let $1<p<N$  and let $u\in \mathcal D^{1,p}(\R^N)\setminus \{0\}$ be a possibly sign-changing radial weak solution to the equation

\begin{equation}\label{wholespace}
-\Delta_p u =   |u|^{p^*-2}u \quad \textrm{in}\quad \R^N.
\end{equation}
Then, necessarily 
\begin{equation}\label{classification}
u\equiv U_{\lambda}:=\pm  \Big[\frac{\lambda^{\frac{1}{p-1}}N^{\frac{1}{p}}(\frac{N-p}{p-1})^{\frac{p-1}{p}}} {\lambda^{\frac{p}{p-1}}+|\cdot|^{\frac{p}{p-1} }}\Big]^{\frac{N-p}{p}}  
\end{equation}
for some $\lambda>0.$} 
\end{prop}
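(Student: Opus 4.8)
The plan is to show that a nontrivial radial finite-energy solution cannot change sign, and then to invoke the known classification of positive radial solutions. Since $-u$ solves the same equation whenever $u$ does, it is enough to reduce matters to the case $u\geq 0$. By standard regularity theory for the $p$-Laplacian the solution $u$ is continuous, so writing $u=u(r)$ with $r=|x|$ I would work throughout with the continuous radial profile $r\mapsto u(r)$ on $[0,\infty)$.

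The first and main step is to argue by contradiction that $u$ does not change sign. Suppose it did; then there exist radii $r_1<r_2$ with $u(r_1)$ and $u(r_2)$ of opposite sign, and by the intermediate value theorem a radius $R\in(r_1,r_2)$ with $u(R)=0$. Since $u$ is continuous and vanishes on $\partial B(0,R)$, the restriction $u|_{B(0,R)}$ belongs to $W^{1,p}_0(B(0,R))$, and testing the weak formulation of (\ref{wholespace}) against functions supported in $B(0,R)$ shows that $u|_{B(0,R)}$ is a radial weak solution of (\ref{radial bounded}) with $\mu=1$ on $B=B(0,R)$. Proposition \ref{nonexx} then forces $u\equiv 0$ on $B(0,R)$, which contradicts $u(r_1)\neq 0$ with $r_1<R$. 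Hence $u$ has constant sign on $\R^N$.

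It then remains to treat the one-sign case, say $u\geq 0$ (replacing $u$ by $-u$ otherwise). Since $u\not\equiv 0$ and $-\Delta_p u=u^{p^*-1}\geq 0$, the strong maximum principle \cite{V,PuSe} gives $u>0$ on $\R^N$. Thus $u$ is a positive radial solution of $-\Delta_p u=u^{p^*-1}$ of finite energy, and the uniqueness of such solutions due to Guedda-Veron \cite{GV2} (together with \cite{T}) identifies $u$ with a Talenti function centred at the origin, that is $u=U_\lambda$ for some $\lambda>0$; combined with the sign reduction above this yields (\ref{classification}).

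The substantive input here is Proposition \ref{nonexx}; once it is available the argument is elementary, and the only point that requires genuine care is the reduction to the ball, namely verifying that $u|_{B(0,R)}$ really lies in $W^{1,p}_0(B(0,R))$ and solves the Dirichlet problem there. I would emphasise that this step deliberately sidesteps the unique-continuation difficulty flagged in connection with the half-space conjecture: a priori $u$ could vanish on an inner sub-ball, but Proposition \ref{nonexx} is a pure nonexistence statement that applies equally to such degenerate radial configurations, so no unique continuation principle is invoked.
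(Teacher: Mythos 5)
Your proposal is correct and follows the same overall strategy as the paper: reduce to showing that $u$ cannot change sign by exploiting Proposition \ref{nonexx}, then conclude via the strong maximum principle and the Guedda--V\'eron classification \cite{GV2} of positive radial solutions. The only difference is in how the sign-change contradiction is closed: the paper invokes the finiteness of the nodal domains (Theorem \ref{prep}) to isolate the outermost nodal region, applies Proposition \ref{nonexx} to the ball it encloses, and then contradicts Hopf's boundary point lemma, whereas you take any radius $R$ at which the continuous radial profile vanishes between two radii of opposite sign and contradict Proposition \ref{nonexx} directly on $B(0,R)$. Your variant is slightly leaner (it needs neither Theorem \ref{prep} nor Hopf's lemma at this stage), and the one point requiring care --- that the continuous function $u|_{B(0,R)}\in W^{1,p}(B(0,R))$ vanishing on $\partial B(0,R)$ lies in $W^{1,p}_0(B(0,R))$ and solves the Dirichlet problem there --- is standard and correctly identified in your write-up.
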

\noindent In Section \ref{radial section} we show that the above propositions yield a precise representation of the Palais-Smale sequences for radial problems, see Proposition \ref{radialvariant} and Proposition \ref{radialB}.\\

\noindent In a nonradial setting we have the following classification results by means of the Morse index, we recall its definition in Section \ref{morsection}. The assumption $p>2$ allows to have twice differentiability of the associated energy functionals. 
In the spirit of \cite{DFSV}, the following theorem states that the number of nodal regions of a solution cannot exceed its own index.
\begin{theorem}\label{morse theorem}
Let $\mathcal O$ be a smooth domain of $\R^N,$ $2<p<N,$ and let $u\in \DR_0^{1,p}(\mathcal O)$ be a solution to the equation

\begin{equation*}
-\Delta_p u =   |u|^{p^*-2}u \quad \textrm{in}\quad  \mathcal D'(\mathcal {O}).
\end{equation*}  
Then 

$$\sharp \mathcal N_u \leq i(u),$$
where $i(u)$ is the Morse index of $u.$
\end{theorem}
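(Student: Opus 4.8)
The plan is to exhibit an explicit subspace of $\DR_0^{1,p}(\mathcal O)$ of dimension $\sharp\mathcal N_u$ on which the second variation of the energy at $u$ is negative definite; this immediately forces $i(u)\ge \sharp\mathcal N_u$. Recall (see Section \ref{morsection}) that since $2<p<N$ the energy functional $E(u)=\int_{\mathcal O}\big(|\nabla u|^p/p-|u|^{p^*}/p^*\big)$ is twice continuously differentiable, and that $i(u)$ is the maximal dimension of a subspace $V\subset\DR_0^{1,p}(\mathcal O)$ on which the quadratic form
\[
Q_u(v)=\int_{\mathcal O}\Big[|\nabla u|^{p-2}|\nabla v|^2+(p-2)|\nabla u|^{p-4}(\nabla u\cdot\nabla v)^2-(p^*-1)|u|^{p^*-2}v^2\Big]
\]
is negative definite.

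Next I would set up the candidate subspace. Let $\omega_1,\dots,\omega_m$ be the nodal domains of $u$, where $m=\sharp\mathcal N_u$ is finite by Theorem \ref{prep} $ii)$, and put $u_j:=u\,\chi_{\omega_j}$. Each $u_j$ lies in $\DR_0^{1,p}(\mathcal O)$: it is the restriction of $u^+$ or $u^-$ to a connected component of $\{u\neq 0\}$, extended by zero, and such restrictions preserve membership in the space with $\nabla u_j=\chi_{\omega_j}\nabla u$. The functions $u_1,\dots,u_m$ are nonzero with pairwise disjoint supports, hence linearly independent.

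The heart of the argument is the evaluation of $Q_u$ on these functions. On $\omega_j$ one has $u_j=u$ and $\nabla u_j=\nabla u$, so the degenerate term collapses to $(p-2)|\nabla u|^p$ and
\[
Q_u(u_j)=(p-1)\int_{\omega_j}|\nabla u|^p-(p^*-1)\int_{\omega_j}|u|^{p^*}.
\]
Invoking the energy identity of Theorem \ref{prep} $i)$, namely $\int_{\omega_j}|\nabla u|^p=\int_{\omega_j}|u|^{p^*}$, this reduces to
\[
Q_u(u_j)=(p-p^*)\int_{\omega_j}|u|^{p^*}<0,
\]
because $p<p^*$ and $\int_{\omega_j}|u|^{p^*}>0$. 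Since the $u_j$ have disjoint supports, the off-diagonal terms $Q_u(u_j,u_l)$ with $j\neq l$ vanish (the relevant integrands are zero a.e.), so for any $v=\sum_j c_j u_j\neq 0$ one obtains $Q_u(v)=\sum_j c_j^2\,Q_u(u_j)<0$. Thus $Q_u$ is negative definite on the $m$-dimensional space $\mathrm{span}\{u_1,\dots,u_m\}$, giving $i(u)\ge m=\sharp\mathcal N_u$.

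The point requiring care, and the main obstacle, is the rigorous admissibility of the $u_j$ in the variational characterisation of $i(u)$: one must verify both that $u_j\in\DR_0^{1,p}(\mathcal O)$ and that $Q_u$ is well defined and continuous on this space. The degenerate factor $|\nabla u|^{p-4}$ is controlled by the pointwise bound $|\nabla u|^{p-4}(\nabla u\cdot\nabla v)^2\le|\nabla u|^{p-2}|\nabla v|^2$, which keeps the integrand summable; here the hypothesis $p>2$ is essential, both for this summability and for the $C^2$ regularity underlying the definition of the Morse index. The $C^{1,\alpha}_{\mathrm{loc}}$ regularity theory for the $p$-Laplacian ensures each $u_j$ is a legitimate test direction, and density of smooth compactly supported functions, together with continuity of $Q_u$, lets one transfer the negative definiteness to the test-function class used in Section \ref{morsection} if the index is defined there.
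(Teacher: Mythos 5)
Your proposal is correct and follows essentially the same route as the paper: one negative direction per nodal domain, obtained by evaluating the second variation on the restriction of $u$ to $\omega_j$ and using the identity $\int_{\omega_j}|\nabla u|^p=\int_{\omega_j}|u|^{p^*}$ from Theorem \ref{prep}~$i)$ to get $(p-p^*)\int_{\omega_j}|u|^{p^*}<0$, with disjoint supports giving negative definiteness on the span. The admissibility issue you rightly flag (whether $u\chi_{\omega_j}$ is a legitimate test direction) is exactly what the paper's Lemma \ref{approx} resolves, by testing instead with compactly supported Lipschitz approximants of $u|_{\omega_j}$ and passing to the limit in the quadratic form.
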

As a consequence of the above theorem we have the following classification results which are suitable when studying solutions with min-max methods, see e. g. \cite{Will}. \\ 
In the spirit of \cite{F} we have the following 
\begin{theorem}\label{Halfspace0}
Let $2<p<N.$ \\
1) Let $u\in \DR^{1,p}_0(\R_+^N)$ be a weak solution to the equation

\begin{equation}\label{hh}
-\Delta_p u =   |u|^{p^*-2}u \quad \textrm{in}\quad \R_+^N.
\end{equation}  Then $u\equiv 0$ if and only if $i(u)\leq 1.$ \\
2) Let $u\in \DR^{1,p}(\R^N)$ be a weak solution to  
\begin{equation}\label{hhh}
-\Delta_p u =   |u|^{p^*-2}u \quad \textrm{in}\quad \R^N.
\end{equation} 
If $i(u)\leq1,$ then either $u\equiv 0$ or for some $x_0\in \R^N$ and $\lambda>0$ and up to the sign
$$u\equiv U_{\lambda, x_0}:=  \Big[\frac{\lambda^{\frac{1}{p-1}}N^{\frac{1}{p}}(\frac{N-p}{p-1})^{\frac{p-1}{p}}} {\lambda^{\frac{p}{p-1}}+|\cdot-x_0|^{\frac{p}{p-1} }}\Big]^{\frac{N-p}{p}}  .
$$
\end{theorem}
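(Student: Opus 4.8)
The plan is to combine Theorem \ref{morse theorem} with the two nonexistence/classification propositions already established. For part 1), the strategy is to argue that a nontrivial solution on the half-space must have Morse index at least $2$, which forces $i(u)\leq 1\Rightarrow u\equiv 0$; the reverse implication $u\equiv 0\Rightarrow i(u)=0\leq 1$ is trivial. First I would note that a nontrivial solution $u\in\DR^{1,p}_0(\R_+^N)$ must change sign: indeed if $u\geq 0$ the conjecture is already known to hold (as recalled in the excerpt, via \cite{MW}), giving $u\equiv 0$. Hence any nontrivial $u$ has at least two nodal domains, so $\sharp\mathcal N_u\geq 2$. By Theorem \ref{morse theorem} we then get $i(u)\geq\sharp\mathcal N_u\geq 2$, so $i(u)\leq 1$ is incompatible with $u\not\equiv 0$. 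This yields the ``only if'' direction, and assembling with the trivial converse completes 1).

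For part 2), the same philosophy applies on the whole space. Assume $i(u)\leq 1$ and $u\not\equiv 0$. By Theorem \ref{morse theorem}, $\sharp\mathcal N_u\leq i(u)\leq 1$, so $u$ has exactly one nodal domain, meaning $u$ does not change sign; up to replacing $u$ by $-u$ we may take $u\geq 0$ (and by the strong maximum principle \cite{V,PuSe}, $u>0$). The task then reduces to classifying the nonnegative entire solutions of $-\Delta_p u=u^{p^*-1}$ on $\R^N$. I would invoke the symmetry results of Sciunzi \cite{S} (cited in the excerpt) to conclude that such a positive finite-energy solution must be radially symmetric about some point $x_0$, and then apply Proposition \ref{nonexxx} (equivalently the uniqueness of radial positive solutions \cite{GV2}) to identify $u$ with a Talenti function $U_{\lambda,x_0}$. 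Restoring the sign gives the stated conclusion, up to sign and translation.

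The main obstacle is ensuring that the sign-changing step in part 1) is rigorous: one must verify that the ``$u\geq 0$ implies $u\equiv 0$'' result of \cite{MW} applies at the regularity available here, and that Theorem \ref{morse theorem} is genuinely applicable to half-space solutions (the theorem is stated for smooth domains $\mathcal O$, and $\R_+^N$ qualifies). A subtler point is the counting of nodal domains: the inequality $i(u)\geq\sharp\mathcal N_u$ rests on constructing, from the restrictions $u|_{\omega_j}$ to the distinct nodal domains, a space of admissible directions on which the second variation of the energy is negative. The twice-differentiability secured by $p>2$ is precisely what makes the Morse-index argument of Theorem \ref{morse theorem} available, so part of the work is confirming that the hypotheses of that theorem are met in both the half-space and full-space settings. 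For part 2), the delicate ingredient is the passage from ``one nodal domain / constant sign'' to full radial symmetry: this is where Sciunzi's moving-plane analysis \cite{S} is essential, since without it one cannot reduce the classification to the radial Proposition \ref{nonexxx}.
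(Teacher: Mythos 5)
Your argument is correct and follows essentially the same route as the paper: Theorem \ref{morse theorem} reduces matters to a single nodal domain, the half-space case is settled by the nonnegative Liouville result of \cite{MW}, and the whole-space case by Sciunzi's classification \cite{S} combined with the radial uniqueness of \cite{GV2}. The only divergence is the sub-case where the unique nodal domain is a proper subset: you dispose of it by observing that $u$ is then signed (so \cite{MW}, respectively the strong maximum principle on $\R^N$, applies), whereas the paper runs a separate argument via Lemma \ref{lem} (vanishing normal derivative) and Hopf's boundary point lemma; since the authors themselves note in Remark \ref{Rk2} that the strong maximum principle yields an alternative conclusion, your shortcut is legitimate.
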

\begin{remark}\label{Rk3} To the best of our knowledge it is not clear whether Talenti's functions have index exactly equal to 1.
\end{remark}
\noindent For bounded domains we have the following  
\begin{theorem}\label{bddstar} Let $2<p<N,$ and $\Omega$ is a smooth bounded domain of $\R^N$, starshaped about the origin, namely such that $x \cdot \nu \geq 0$ on $\partial \Omega,$ where $\nu$ 
is the exterior normal unit vector. Let $u\in W^{1,p}_0(\Omega)$ be such that
\begin{equation}\label{eqOm}
-\Delta_p u =   |u|^{p^*-2}u \quad \textrm{in}\quad \mathcal D'( \Omega).
\end{equation} 
If $i(u)\leq 1,$ then it holds that $u\equiv 0.$
\end{theorem}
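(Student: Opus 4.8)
The plan is to combine the index bound of Theorem~\ref{morse theorem}, which forces $u$ to have constant sign, with a Pohozaev-type identity exploiting the star-shapedness, and then to replace the (open) unique continuation step by the strong maximum principle, which becomes available precisely because $u$ does not change sign. First I would invoke Theorem~\ref{morse theorem}: since $i(u)\leq 1$ we get $\sharp\mathcal N_u\leq 1$, so either $u\equiv 0$ and we are done, or $u$ has a single nodal domain, i.e.\ $u$ has constant sign on $\Omega$. Replacing $u$ by $-u$ if necessary (which alters neither the equation nor the Morse index), I may assume $u\geq 0$, and I argue by contradiction assuming $u\not\equiv 0$. By the regularity theory for the critical $p$-Laplacian one upgrades $u$ to $u\in L^\infty(\Omega)\cap C^{1,\alpha}(\overline\Omega)$, and then the strong maximum principle together with the Hopf boundary lemma (\cite{V,PuSe}) yield $u>0$ in $\Omega$ and $\partial_\nu u<0$, hence $|\nabla u|>0$, at every point of $\partial\Omega$.

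Next I would establish the Pohozaev identity. Multiplying the equation by $x\cdot\nabla u$ and integrating by parts, using $u=0$ on $\partial\Omega$ (so that there $\nabla u=(\partial_\nu u)\,\nu$ and $|\nabla u|=|\partial_\nu u|$), one obtains
\begin{equation*}
\frac{N-p}{p}\int_\Omega|\nabla u|^p\,dx+\frac{p-1}{p}\int_{\partial\Omega}|\nabla u|^p\,(x\cdot\nu)\,d\sigma=\frac{N}{p^*}\int_\Omega|u|^{p^*}\,dx.
\end{equation*}
On the other hand, testing the equation with $u$ itself, equivalently summing the identity of Theorem~\ref{prep}~$i)$ over the single nodal domain, gives the energy identity $\int_\Omega|\nabla u|^p=\int_\Omega|u|^{p^*}$. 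Since $N/p^*=(N-p)/p$, the two bulk terms cancel and the Pohozaev identity collapses to
\begin{equation*}
\frac{p-1}{p}\int_{\partial\Omega}|\nabla u|^p\,(x\cdot\nu)\,d\sigma=0.
\end{equation*}

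The contradiction then follows cleanly, and it is here that star-shapedness in the weak form $x\cdot\nu\geq 0$ is enough. As $p>1$ and $x\cdot\nu\geq 0$ on $\partial\Omega$, the integrand above is nonnegative, so $|\nabla u|^p\,(x\cdot\nu)=0$ $\sigma$-a.e.\ on $\partial\Omega$; but $|\nabla u|>0$ everywhere on $\partial\Omega$ by the Hopf lemma, forcing $x\cdot\nu=0$ $\sigma$-a.e.\ on $\partial\Omega$. This contradicts the divergence theorem, which gives $\int_{\partial\Omega}(x\cdot\nu)\,d\sigma=\int_\Omega\dvg(x)\,dx=N|\Omega|>0$. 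Hence the assumption $u\not\equiv 0$ is untenable and $u\equiv 0$.

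I expect the main obstacle to be the rigorous justification of the Pohozaev identity at the critical exponent: one must first bootstrap $u$ to a bounded, up-to-the-boundary $C^{1,\alpha}$ solution so that the boundary integral $\int_{\partial\Omega}|\nabla u|^p(x\cdot\nu)$ is well defined, and then carry out the integration by parts with the degenerate coefficient $|\nabla u|^{p-2}$, typically through an approximation or cut-off argument near the critical set $\{\nabla u=0\}$. All remaining steps are elementary. It is worth stressing that the constant-sign reduction supplied by Theorem~\ref{morse theorem} is exactly what makes the Hopf lemma applicable, thereby circumventing the unique continuation principle whose unavailability for the $p$-Laplacian is the precise difficulty flagged in the Introduction.
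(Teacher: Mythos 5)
Your proposal is correct and follows essentially the same route as the paper: Theorem~\ref{morse theorem} reduces to a single nodal domain (hence constant sign), regularity upgrades $u$ to $C^{1,\alpha}(\overline\Omega)$, the Pohozaev identity on the starshaped domain (Theorem~1.1 of \cite{GV}) forces the normal derivative to vanish somewhere on $\partial\Omega$, and Hopf's boundary point lemma gives the contradiction. The only difference is cosmetic — you phrase the contradiction via $x\cdot\nu\equiv 0$ versus the divergence theorem, while the paper locates a point $x_0$ with $u_\nu(x_0)=0$ — and your explicit remark that $x\cdot\nu$ cannot vanish identically is a welcome detail the paper leaves implicit.
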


\begin{remark}
In the case $p=2$ and $N\geq 3,$ this result is well-known without any restriction on the index, as a consequence of the unique continuation principle.  
\end{remark}
\begin{remark}
If $\Omega$ is an Esteban-Lions type domain (namely a generalisation of Pohozaev's starshaped domains, see \cite{poh}), we believe that Theorem \ref {bddstar} holds, see \cite{EL}.
\end{remark}

\section{The normal derivative vanishes at the boundary}\label{normal section}
In this section we show that, regardless of their sign, solutions to (\ref{imp}) have vanishing normal derivative along the
boundary. Hereafter we set $H=\mathbb R_+^N=\{x\in\R^N:x_N>0\},$ and $B_R=\{x\in \mathbb R^N\,:\,|x|<R\},$ for some $R>0.$ Moreover, we denote by $n(\cdot)$ the exterior unit normal to $\partial(H\cap B_R)$ whose $N$-th component is $n_N(\cdot).$ The $N$-th partial derivative will be denoted by $\partial_N.$
\begin{lemma} \label{lem}
Let $1<p<N$ and $u\in \mathcal D ^{1,p}_0(H)$ be a weak solution to equation (\ref {imp}).
Then $\partial_N u=0$  everywhere on $\partial H$.
\end{lemma}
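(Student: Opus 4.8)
The plan is to establish a Pohozaev-type identity on the bounded truncated domain $H\cap B_R$ and then let $R\to\infty$, extracting along a suitable sequence the vanishing of the normal derivative on $\partial H$. The strategy closely follows the Esteban–Lions scheme referenced in the excerpt. First I would exploit the scaling/dilation invariance of equation (\ref{imp}): multiplying the equation by the vector field $x\cdot\nabla u$ (the generator of dilations) and integrating over $H\cap B_R$ yields, after integration by parts, a boundary integral identity. The critical exponent $p^*$ is precisely the value for which the bulk terms cancel, so the resulting identity equates a boundary term coming from the gradient to a boundary term coming from the nonlinearity, plus contributions over the spherical part $\partial B_R\cap H$.

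The key observation is that on the flat boundary $\partial H=\{x_N=0\}$, where $u=0$ in the trace sense, the tangential derivatives of $u$ vanish, so $\nabla u$ is purely normal there and $x\cdot\nabla u$ reduces (since $x_N=0$ on $\partial H$) to a tangential dot product that controls only $\partial_N u$. The nonlinearity term $|u|^{p^*}$ vanishes on $\partial H$ because $u=0$ there. After carefully isolating the flat-boundary contribution, the Pohozaev identity should take the schematic form
\begin{equation*}
c_p\int_{\partial H\cap B_R}|\partial_N u|^p\,\mathrm{d}x' = \text{(spherical terms on }\partial B_R\cap H),
\end{equation*}
for a positive constant $c_p$. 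The left-hand side is manifestly nonnegative, so everything hinges on showing the right-hand side tends to zero.

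The main obstacle, as usual in these arguments, is the control of the spherical boundary terms as $R\to\infty$. Here I would use the global integrability $u\in\mathcal D^{1,p}_0(H)$, which gives $\nabla u\in L^p$ and $u\in L^{p^*}$ globally. Since the integrals $\int_{H}|\nabla u|^p$ and $\int_H|u|^{p^*}$ are finite, the co-area/Fubini argument shows that the integrand $\big(|\nabla u|^p+|u|^{p^*}\big)$ restricted to spheres $\partial B_R$ is integrable in $R$ over $(0,\infty)$; hence there exists a sequence $R_n\to\infty$ along which $R_n\int_{\partial B_{R_n}\cap H}\big(|\nabla u|^p+|u|^{p^*}\big)\to 0$. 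This is exactly the rate needed to kill the spherical terms, which carry a single power of $R$ from the factor $x$ in the multiplier. Passing to this sequence forces the right-hand side to vanish in the limit, and by monotone convergence the left-hand side converges to $c_p\int_{\partial H}|\partial_N u|^p$. Therefore $\partial_N u=0$ almost everywhere, hence everywhere by the standard interior-to-boundary regularity for $p$-Laplacian solutions.

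A technical point that deserves care is the regularity needed to justify the integration by parts and the pointwise trace statements: weak solutions of the critical $p$-Laplacian are known to be $C^{1,\alpha}_{\mathrm{loc}}$ up to the flat boundary, which legitimizes evaluating $\partial_N u$ on $\partial H$ and treating $x\cdot\nabla u$ as an admissible (locally) test direction. I would invoke this regularity theory and a density/cutoff approximation to make the Pohozaev computation rigorous, truncating with a cutoff in $R$ and handling the resulting error terms by the same spherical-decay selection of $R_n$. The heart of the proof is thus the combination of the dilation Pohozaev identity with the sphere-selection argument; the sign structure of the flat-boundary term does the rest.
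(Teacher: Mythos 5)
There is a genuine gap, and it sits at the heart of your argument: the choice of multiplier. You propose to multiply the equation by the dilation generator $x\cdot\nabla u$ and derive a Pohozaev identity whose flat-boundary term controls $\int_{\partial H\cap B_R}|\partial_N u|^p$. But for the half-space this identity degenerates. The flat-boundary contribution of the dilation identity consists of the terms $(x\cdot\nabla u)\,|\nabla u|^{p-2}\partial_\nu u$, $-(x\cdot\nu)\tfrac{|\nabla u|^p}{p}$ and $(x\cdot\nu)\tfrac{|u|^{p^*}}{p^*}$. On $\partial H=\{x_N=0\}$ one has $x\cdot\nu=-x_N=0$ (the flat boundary passes through the origin), and since $u=0$ there the gradient is purely normal, $\nabla u=(\partial_N u)e_N$, while $x=(x',0)$ is purely tangential, so $x\cdot\nabla u=0$ as well. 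Every flat-boundary term therefore vanishes identically, and the identity reads $0=$ (spherical terms): it carries no information whatsoever about $\partial_N u$ on $\partial H$. Your claim that ``$x\cdot\nabla u$ reduces to a tangential dot product that controls only $\partial_N u$'' is exactly backwards. This is the same degeneracy that prevents the classical starshaped Pohozaev argument from settling the half-space conjecture directly.

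The paper instead uses the \emph{translation} multiplier $\partial_N u$ (following Esteban--Lions), i.e.\ the vector field $|\nabla u|^{p-2}\partial_N u\,\nabla u$, whose divergence-theorem identity produces the nondegenerate flat-boundary term $(1-\tfrac1p)\int_{B_R\cap\partial H}|\partial_N u|^p\,\mathrm{d}\sigma$, balanced only against integrals over the spherical cap $H\cap\partial B_R$; these are then killed along a sequence $R_k\to\infty$ using $\nabla u\in L^p(H)$, $u\in L^{p^*}(H)$ — your sphere-selection step, which is correct and is indeed the right tool here (and with this multiplier you do not even need the extra factor of $R_n$ in the decay). A second, more minor gap: for $p>2$ the field $|\nabla u|^{p-2}\nabla u$ is not regular enough near critical points of $u$ for the integration by parts to be justified by $C^{1,\alpha}$ regularity and a ``density/cutoff'' argument alone; the paper has to run the computation on DiBenedetto-type regularized solutions $u_\varepsilon$ of $-\mathrm{div}\bigl((\varepsilon+|\nabla u_\varepsilon|^2)^{(p-2)/2}\nabla u_\varepsilon\bigr)=|u|^{p^*-2}u$ and pass to the limit $\varepsilon\to 0^+$. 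Replacing your multiplier by $\partial_N u$ and adding this regularization would bring your outline in line with a correct proof.
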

\begin{proof}
The case $1 <p < 2$ had been obtained in \cite{MW} without any positivity assumption, while the case $p=2$ is known from \cite{EL}. \\
For $2<p<N$ we argue as follows. We observe that solutions of (\ref{imp}) are $C^{1,\alpha}_{\textrm{loc}}(\bar{H}),$ see e.g. \cite{Di, Peral, Tr}. \\
As in \cite{MW} we prove the following local Pohozaev's identity, in the spirit of a similar identity proved in \cite{EL} in the case $p=2:$

\begin{equation}\label{poh}
\begin{split}
\left(1-\frac{1}{p}\right)\int_{B_R\cap\partial H}|\partial_N u|^p \textrm{d}\sigma & =\int_{H\cap\partial B_R}[\partial_Nu |\nabla u|^{p-2}\nabla u\cdot n(\sigma)-
\frac{|\nabla u|^p}{p}n_N(\sigma)]\textrm{d}\sigma \\
& \quad \quad \quad +\int_{H\cap\partial B_R}\frac{|u|^{p^*}}{p^*}n_N(\sigma)\textrm{d}\sigma.
\end{split}
\end{equation}
The desired conclusion will be then achieved. Indeed, since $\nabla u\in L^p(H)$ and $u\in L^{p^*}(H)$ the right hand side is bounded by a function $M(R)$ such that for some sequence $R_k\rightarrow\infty,$ $M(R_k)\rightarrow 0.$  \\ In order to prove (\ref{poh}) we use a regularisation argument, see e.g. \cite{Di} and \cite{GV}. We point out that in \cite{GV} a Pohozaev identity for the $p$-Laplacian is available in the context of Dirichlet problems on bounded domains.\\
By antireflection with respect to $\partial H$ extend (and still denote by) $u$ to a solution on the whole $\R^N.$ Following [p. 833, \cite{Di}], we consider $u_\varepsilon$ solution to the boundary value problem
\begin{align*}
-\textrm{div}\left(\left(\varepsilon+ |\nabla u_\varepsilon|^2\right)^{\frac{p-2}{2}}\nabla u_\varepsilon\right)= |u|^{p^*-2}u \quad \quad &\textrm{in}\,\, B_{2R}, \\
u_\varepsilon = u \quad \quad  & \textrm{on} \,\,  \partial B_{2R} ;
\end{align*}

 $u_\varepsilon \in  C^2(\bar{B}_R)$ and uniformly bounded for $\varepsilon \in (0, 1]$ in $C^{1,\alpha}(\bar{B}_R).$ 
 By the Ascoli-Arzel\'a theorem for a suitable sequence $\varepsilon \rightarrow 0 ^+,$ $u_\varepsilon\rightarrow u$ and $\nabla u_\varepsilon  \rightarrow \nabla u$
uniformly on $\bar{B}_R$. \\
Consider the vector field $$v_\varepsilon:=\left(\varepsilon+ |\nabla u_\varepsilon|^2\right)^{\frac{p-2}{2}}\partial_N u_\varepsilon\nabla u_\varepsilon.$$
Since $$\textrm{div} \,v_\varepsilon=\partial_N  u_\varepsilon{\textrm{ div~}}\left(\left(\varepsilon+ |\nabla u_\varepsilon|^2\right)^{\frac{p-2}{2}}\nabla u_\varepsilon\right)+\left(\varepsilon+ |\nabla u_\varepsilon|^2\right)^{\frac{p-2}{2}}\nabla u_\varepsilon\cdot\nabla\partial_N u_\varepsilon,$$
by the divergence theorem we obtain
\begin{align*}
&\int_{B_R\cap H}\partial_N  u_\varepsilon{\textrm{ div~}}\left(\left(\varepsilon+ |\nabla u_\varepsilon|^2\right)^{\frac{p-2}{2}}\nabla u_\varepsilon\right)\textrm{d}x\\ & \quad \quad \quad =\int_{\partial(H\cap B_R)}v_\varepsilon \cdot n(\sigma)\textrm{d} \sigma -\int_{B_R\cap H}\left(\varepsilon+ |\nabla u_\varepsilon|^2\right)^{\frac{p-2}{2}}\nabla u_\varepsilon\cdot\nabla\partial_N u_\varepsilon \textrm{d}x
 \\
& \quad \quad \quad =\int_{\partial(H\cap B_R)}v_\varepsilon \cdot n(\sigma)\textrm{d} \sigma-\int_{\partial (H\cap B_R)}\frac{[\varepsilon+|\nabla u_\varepsilon|^2]^{p/2}}{p} n_N(\sigma)\textrm{d}\sigma.
\end{align*}
Moreover
$$
\int_{H\cap B_R}\partial_Nu|u|^{p^*-2} u~\textrm{d}x =\int_{\partial(H\cap B_R)}\frac{|u|^{p^*}}{p^*}n_N(\sigma)\textrm{d}\sigma =  \int_{H\cap \partial B_R} \frac{|  u|^{p^*}}{p^*} n_N(\sigma)\textrm{d}\sigma.
$$
Equating the above two expressions and passing to the limit, as $\varepsilon \rightarrow 0^+$ we obtain (\ref{poh}). This concludes the proof.
\end{proof}
\section{General facts about nodal regions and proof of Theorem \ref{prep}}
The following approximation result will be used to prove Theorem \ref{prep} and Theorem \ref{morse theorem}.
\begin{lemma}\label{approx}
Let $u\in C^{0,1}_{\textrm{loc}}(\R^N)\cap \mathcal D^{1,p}(\R^N)$ and let $\omega$ be a nodal domain of $u,$ and $u|_\omega$ its restriction to $\omega.$ Then there exists a sequence $\{u_n\}_n\subset C^{0,1}_c(\omega)$ such that 
\begin{itemize}
  \item [] $i)$ $u_n \rightarrow u|_\omega$ in $L^{p^*}(\omega)$ and everywhere in $\omega$,
  \item [] $ii)$ $\nabla u_n \rightarrow \nabla u|_\omega$ in $L^p(\omega; \R^N)$ and almost everywhere in $\omega$. 
  \end{itemize}

\end{lemma}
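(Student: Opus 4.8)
The plan is to build the approximants by a two-parameter truncation followed by a diagonal argument: first cut the \emph{values} of $u$ away from $0$ to detach the support from the lateral boundary $\partial\omega$, then cut in \emph{space} with a cut-off at infinity to gain compact support, and finally let both parameters run along a diagonal. No mollification will be needed, since truncations and products of compactly supported Lipschitz functions stay Lipschitz. Since $u$ is continuous and $\omega$ is a connected component of $\{u\neq 0\}$, the function $u$ has a fixed sign on $\omega$; I assume $u>0$ on $\omega$, the case $u<0$ being symmetric (replace $(u-\delta)_+$ by $-(-u-\delta)_+$ below).

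For $\delta>0$ set $v_\delta:=(u-\delta)_+$ on $\omega$, extended by $0$ to $\R^N$. The first point to check is that $v_\delta\in C^{0,1}_{\mathrm{loc}}(\R^N)$ with support detached from $\partial\omega$. Indeed $\partial\omega\subset\{u=0\}$, so by continuity $|u|<\delta$ on a whole neighbourhood of each boundary point, whence $(u-\delta)_+\equiv 0$ there and $v_\delta$ vanishes near $\partial\omega$; thus the extension by zero glues Lipschitzly. Using continuity of $u$ together with the fact that $\omega$ is a full connected component of $\{u\neq0\}$, one verifies that the super-level set $\{x\in\omega:u(x)\ge\delta\}$ is closed in $\R^N$ and contained in $\omega$, and this closed set contains $\operatorname{supp} v_\delta$. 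Next fix $\chi_R\in C_c^\infty(\R^N)$ with $\chi_R\equiv1$ on $B_R$, $\operatorname{supp}\chi_R\subset B_{2R}$, $0\le\chi_R\le1$ and $|\nabla\chi_R|\le C/R$, and put $u_{\delta,R}:=\chi_R\,v_\delta$. Since $v_\delta$ is Lipschitz on the compact set $\overline{B_{2R}}$ and its support is at positive distance from $\partial\omega$ there, we get $u_{\delta,R}\in C^{0,1}_c(\omega)$, so these are legitimate approximants.

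It then remains to organise the two limits. As $R\to\infty$ with $\delta$ fixed: $u_{\delta,R}\to v_\delta$ in $L^{p^*}$ by dominated convergence, while $\nabla u_{\delta,R}=\chi_R\nabla v_\delta+v_\delta\nabla\chi_R$, the first term converging to $\nabla v_\delta$ in $L^p$ by dominated convergence and the second going to $0$ by the critical-exponent estimate
$$\|v_\delta\,\nabla\chi_R\|_{L^p}^p\le \frac{C^p}{R^p}\int_{B_{2R}\setminus B_R}|v_\delta|^p\le \frac{C^p}{R^p}\Big(\int_{B_{2R}\setminus B_R}|v_\delta|^{p^*}\Big)^{p/p^*}|B_{2R}\setminus B_R|^{p/N},$$
which tends to $0$ because $v_\delta\in L^{p^*}$. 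As $\delta\to0$: $(u-\delta)_+\to u$ pointwise on $\omega$, dominated by $u\in L^{p^*}(\omega)$, giving $v_\delta\to u|_\omega$ in $L^{p^*}$; and by the Stampacchia truncation formula $\nabla v_\delta=\mathbf 1_{\{u>\delta\}}\nabla u\to\nabla u$ a.e.\ on $\omega$, dominated by $|\nabla u|\in L^p$, giving $\nabla v_\delta\to\nabla u|_\omega$ in $L^p$. A diagonal choice $\delta_n\to0$, $R_n\to\infty$ then yields $u_n:=u_{\delta_n,R_n}$ with the claimed $L^{p^*}$ and $L^p$ convergence; the everywhere and almost-everywhere statements come for free, since for $x\in\omega$ one has $\chi_{R_n}(x)=1$ and $\delta_n<u(x)$ for large $n$, so $u_n(x)=u(x)-\delta_n\to u(x)$, and at a.e.\ $x$ (where $\nabla u$ exists) $\nabla u_n(x)=\nabla u(x)$ eventually.

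The main obstacle is the vanishing of the boundary term $\|v_\delta\,\nabla\chi_R\|_{L^p}$: this is precisely where criticality $p^*=Np/(N-p)$ is essential, the exponent $1-p/p^*=p/N$ exactly cancelling the volume growth $|B_{2R}\setminus B_R|\sim R^N$ against the factor $R^{-p}$ coming from $|\nabla\chi_R|^p$ and leaving only a tail $\|v_\delta\|_{L^{p^*}(B_{2R}\setminus B_R)}^p\to0$ of a convergent integral. A secondary delicate point, rather than a computation, is the topological verification that $\{u\ge\delta\}\cap\omega$ is closed in $\R^N$ and strictly interior to $\omega$, which is exactly where continuity of $u$ and the maximality of $\omega$ as a connected component enter.
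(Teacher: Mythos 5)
Your proof is correct and follows essentially the same strategy as the paper's: truncate the values of $u$ near zero to detach the support from $\partial\omega$, then apply a spatial cutoff at infinity (the paper uses a smooth truncation $f_n(u)=\tfrac1n f(nu)$ in place of your hard truncation $(u-\delta)_+$, and ties both parameters to the same index $n$ rather than diagonalising). Your explicit H\"older estimate for the cross term $v_\delta\nabla\chi_R$ merely spells out what the paper dismisses as immediate to verify.
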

\begin{proof}
We can suppose that $ u>0$ in $\omega$. Let $f\in C^1(\R;\R)$ an odd function such that

\begin{equation*}\label{f}
f(t)=
\left\{
\begin{array}{ll}
0,& \mathrm{if} \,\, |t|\leq 1, \\
t, & \,\,\mathrm{if}\,\, |t|\geq 2,
\end{array}
\right.
\end{equation*}
 and define for all $n\in \mathbb N,$ $f_n(t):=\frac{1}{n}f(nt).$
We also define $v:= u|_\omega, v_n:=f_n(v).$ It is standard to see that 

\begin{itemize}
  \item []  $v_n \in C_{\textrm{loc}}^{0,1}(\omega)\cap L^{p^*}(\omega)$
  \item []  $\nabla v_n \in L^{\infty}_{\textrm{loc}}(\omega ;\R^N)\cap L^p(\omega; \R^N)$
  \item[]  $\textrm{supp} v_n \subseteq \{x\in \omega \,\,|\,\, u(x)\geq 1/n\} \subset \omega,$ 
  \end{itemize}
moreover by the dominated convergence theorem and the definition of $f_n$ we have 
 \begin{itemize}
  \item [] $a)$ $v_n \rightarrow u|_\omega$ in $L^{p^*}(\omega)$ and everywhere in $\omega$,
  \item [] $b)$ $\nabla v_n \rightarrow \nabla u|_\omega$ in $L^p(\omega; \R^N)$ and almost everywhere in $\omega$.
  \end{itemize}  
 Let now $\theta \in C^1(\R_+),$ with $\theta(t) \in [0,1]$, and such that 
 
 \begin{equation*}\label{theta}
\theta(t)=
\left\{
\begin{array}{ll}
0,& \mathrm{if} \,\, t \geq 2 \\
1, & \,\,\mathrm{if}\,\, 0\leq t \leq 1
\end{array}
\right.
\end{equation*}
and define $$\theta_n(x):=\theta \big(\frac{x}{n}\big).$$
Finally define $u_n:=\theta_n v_n.$ It is immediate to verify that $i)$ and $ii)$ hold.
\end{proof}
\noindent We are in the position to prove Theorem \ref{prep}.
\begin{proof}[Proof of Theorem \ref{prep}]
Since $u\in C^1(\overline{\mathcal O}),$ extending $u$ by zero outside $\mathcal O$ we have that $u\in C^{0,1}_{\textrm{loc}}(\R^N)\cap \mathcal D^{1,p}(\R^N).$ \\
Proof of $i). $ Pick $(u_n)$ given by Lemma \ref{approx} extending by zero outside $\omega.$ By a standard density argument for every $n\in\mathbb N$ one can test (\ref{eqspace})
with $u_n,$ namely $$\int_\omega |\nabla u|^{p-2}\nabla u\nabla u_n=\int_\omega|u|^{p^*-2}uu_n.$$ By Lemma \ref{approx}, as $n\rightarrow \infty$ $i)$ follows. \\
Proof of $ii).$ For $p\in(1,N)$ and for all $v\in \DR^{1,p}(\R^N)$ we write Sobolev's inequality as
\begin{equation}\label{Sobolev}
S(N,p)\Big(\int_{\R^N}|v|^{p^*}\Big)^{p/p^*}\leq \int_{\R^N}|\nabla v|^p.
\end{equation}
Using Sobolev's inequality with $v=u|_\omega $ extended by zero outside $\omega$
we have by $i)$
\begin{equation}\label{Sobolevv}
\int_{\omega}|\nabla u|^p = \int_{\R^N}|\nabla v|^p\geq S(N,p)\Big(\int_{\omega}|u|^{p^*}\Big)^{p/p^*}=S(N,p)\Big(\int_{\omega}|\nabla u|^{p}\Big)^{p/p^*},
\end{equation}
hence
\begin{equation}\label{Sobol}
\Big(\int_{\omega}|\nabla u|^p\Big)^{1-\frac{p}{p^*}}\geq S(N,p),
\end{equation}
namely 
\begin{equation}\label{Sobolevvv}
\int_{\omega}|\nabla u|^p\geq S(N,p)^{N/p}.
\end{equation}
It follows that $$\int_{\mathcal O}|u|^{p^*}= \int_{\mathcal O}|\nabla u|^p=\sum_{\omega\in \mathcal N_u} \int_{\omega}|\nabla u|^p\geq \sum_{\omega\in \mathcal N_u} S(N,p)^{N/p}= S(N,p)^{N/p}\, \sharp \mathcal N_u.$$ And this concludes the proof.
\end{proof}

\section{Radial problems and proof of Proposition \ref{nonexx} and Proposition \ref{nonexxx}}\label{radial section}
 
\begin{proof}[Proof of Proposition \ref{nonexx}]
It is standard to see that $u\in C^{1,\alpha}(\overline{B}),$ see \cite{Di, Li, Peral, Tr}. Suppose $ u \not \equiv 0$. By the strong maximum principle and \cite{GV}, the solution $u$ must change sign (and so it has  a zero in $B \setminus \{0\}$).  The nodal regions of $u$ are spherically symmetric, and the number of those is finite, by Theorem \ref{prep}. 
Now pick a nodal region, say $A=\{x\in B\,:\, R_1<|x|<R_2\}$ with $0<R_1<R_2\leq R.$ We can assume that $u$ solves 
\begin{equation*}
\left\{\begin{array}{lll}
&-\Delta_pu=\mu |u|^{p^*-2}u\qquad &\textrm{~in~} B(0,R_2)\\
&u>0 \qquad &\textrm{~in~}  A \\
&u=0 \qquad &\textrm{~on~} \partial A.
\end{array}
\right.
\end{equation*} 
By Pohozaev's identity, Theorem 1.1 of \cite{GV}, we have $\nabla u=\bf{0}$ on $\partial  B(0,R_2),$ and this is in contradiction with Hopf's boundary point lemma, see e.g. \cite{V}, since $u$ is positive in $A$. This concludes the proof. \end{proof}

\begin{proof}[Proof of Proposition \ref{nonexxx}]
By \cite{GV2} p.160 and the strong maximum principle, it is enough to prove that $u$ does not change sign. 
Let us assume that $u$ changes sign. The nodal regions are spherically symmetric and their number is finite. Therefore, by replacing $u$ by $-u$ if necessary,  there exists $\overline{R}>0$ large enough such that 
\begin{eqnarray}
\nonumber&u\equiv 0,\,\,&\textrm{on}\,\, {\partial B(0,\overline{R})},\\
\nonumber& u > 0,&\textrm{on}\,\, \R^N\setminus \overline{B(0,\overline{R})},
\end{eqnarray}
and so, by Proposition \ref{nonexx}
\begin{eqnarray}
\nonumber&u\equiv 0,\,\,&\textrm{on}\,\, \overline{B(0,\overline{R})},\\
\nonumber& u> 0,&\textrm{on}\,\, \R^N\setminus \overline{B(0,\overline{R})}.
\end{eqnarray}
On the other hand by continuity $\nabla u =\bf{0}$ on $\partial B(0,\overline{R}),$ and this contradicts Hopf's boundary point lemma on $\R^N\setminus \overline{B(0,\overline{R})}.$ This concludes the proof.

\end{proof}

Consider the following assumptions:
\\
\textbf{(A)} \quad \quad  $\Omega$ is the unit ball in $\R^N,$ $1<p<N, $  $a\in L^{N/p}_{\textrm{rad}}(\Omega).$ 
Assume also
$$\textbf{(B)}\quad \quad \quad\quad  \quad \quad \quad\quad \quad \quad \quad  \quad  \mathop{\inf_{u \in W^{1,p}_0(\Omega)}}_{\|\nabla u\|_{L^p}=1} \int_{\Omega}[|\nabla u|^{p}+a(x)|u|^p]\textrm{d}x>0.\quad
\quad \quad \quad \quad \quad \quad \quad \quad \quad \quad \quad\quad\quad \quad \quad \quad \quad \quad \quad\quad\quad $$
\\
Define on $W^{1,p}_0(\Omega)$ $$\phi(u)=\int_{\Omega}\Big(\frac{|\nabla u|^{p}}{p}+a(x)\frac{|u|^p}{p}-\frac{|u|^{p^*}}{p^*}\Big)\textrm{d}x,$$ and denote by $W_{0,{\rm rad}}^{1,p}(\Omega)$ (resp. $\DR_{{\rm rad}}^{1,p}(\R^N)$) the space of radial functions in $W_0^{1,p}(\Omega)$ (resp. $\DR^{1,p}(\R^N)$). We also define on $\mathcal D_{\textrm{rad}}^{1,p}(\R^N)$

$$
\tilde\phi_{\iy}(u)=\int_{\R^N}\Big(\frac{|\nabla u|^p}{p}-\frac{|u|^{p^*}}{p^*}\Big)\textrm{d}x.
$$

\begin{prop} \label{radialvariant} 
{\it Under assumptions {\bf (A)}, {\bf{(B)}} let $\{u_n\}_n$ be a sequence in $W^{1,p}_{0,{\rm rad}} (\Omega)$ such that $$\phi(u_n)\rightarrow c \quad \quad \phi'(u_n)\rightarrow 0 \quad \textrm{in} \,\,(W^{1,p}_{0,\rm rad}(\Omega))'. $$ 
\noindent Then, passing if necessary to a subsequence, there exists a possibly nontrivial solution $v_0\in W^{1,p}_{0,{\rm rad}}(\Omega)$ to

$$-\Delta_p u +a(x)|u|^{p-2}u=  |u|^{p^*-2}u,$$
and  $k$ sequences  $\{\lambda^i_n\}_n \subset \R_+,$ with  $\lambda_n^i \rightarrow 0$, $n\rightarrow \infty$, such that

$$\|u_n-v_0-\sum^k_{i=1}(\lambda^i_n)^{(p-N)/p}v_i (\cdot /\lambda^i_n)\|\rightarrow 0,$$

$$\|u_n\|^p\rightarrow \sum^k_{i=0}\|v_i\|^p,$$

$$\phi(v_0)+\sum^k_{i=1}\tilde\phi_\infty (v_i)=c,$$
where $v_i$ is either identically zero, or for some $\lambda>0$ and up to the sign, it holds that

\begin{equation*}
v_i \equiv   \Big[\frac{\lambda^{\frac{1}{p-1}}N^{\frac{1}{p}}(\frac{N-p}{p-1})^{\frac{p-1}{p}}} {\lambda^{\frac{p}{p-1}}+|\cdot |^{\frac{p}{p-1} }}\Big]^{\frac{N-p}{p}} ,
\end{equation*}
and $$\tilde\phi_{\infty}(v_i)=\frac{S^{N/p}}{N} .$$
Moreover if $a\equiv 0,$ then all weakly convergent subsequences of $\{u_n\}_n$ are weakly convergent to zero in $W^{1,p}_{0,{\rm rad}} (\Omega).$ In particular $v_0\equiv 0$ and 
hence $\phi(v_0)=0.$}

\end{prop}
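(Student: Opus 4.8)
The plan is to adapt Struwe's profile-decomposition argument to the radial $p$-Laplacian setting, the decisive simplification being that radial symmetry forces every concentration to take place at the origin, an interior point of $\Omega$; this is precisely what rules out the half-space profiles discussed in the Introduction and lets us invoke the radial classification of Proposition \ref{nonexxx}. First I would establish that $\{u_n\}_n$ is bounded in $W^{1,p}_{0,\mathrm{rad}}(\Omega)$. Writing $\phi(u_n)-\frac{1}{p^*}\langle\phi'(u_n),u_n\rangle=\left(\frac1p-\frac1{p^*}\right)\int_\Omega[|\nabla u_n|^p+a(x)|u_n|^p]$ and using $\phi(u_n)\to c$, $\phi'(u_n)\to0$ together with assumption \textbf{(B)}, the right-hand side controls $\|\nabla u_n\|_{L^p}^p$ from above, giving boundedness. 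Passing to a subsequence, $u_n\rightharpoonup v_0$ weakly in $W^{1,p}_{0,\mathrm{rad}}(\Omega)$, and by the standard almost-everywhere convergence of gradients for $p$-Laplacian Palais--Smale sequences one passes to the limit in the nonlinear terms to see that $v_0$ is a radial weak solution of $-\Delta_p u+a(x)|u|^{p-2}u=|u|^{p^*-2}u$.

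Next I would run the blow-up iteration. Setting $w_n^1=u_n-v_0$, a Brezis--Lieb splitting shows that $w_n^1\rightharpoonup0$ is a Palais--Smale sequence for $\tilde\phi_\infty$ at the level $c-\phi(v_0)$; if $w_n^1\to0$ strongly we stop with $k=0$. Otherwise compactness of the critical embedding fails, and the radial concentration function of $|w_n^1|^{p^*}$ selects a scale $\lambda_n^1\to0$ \emph{centered at the origin}; the rescalings $(\lambda_n^1)^{(N-p)/p}w_n^1(\lambda_n^1\,\cdot)$ are bounded in $\DR^{1,p}_{\mathrm{rad}}(\R^N)$ and, again by a.e.\ gradient convergence, converge weakly to a nontrivial entire radial solution $v_1$ of $-\Delta_p u=|u|^{p^*-2}u$ on $\R^N$. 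Because $\mathrm{dist}(0,\partial\Omega)=1$ and $\lambda_n^1\to0$, the admissibility $(\lambda_n^1)^{-1}\mathrm{dist}(0,\partial\Omega)\to\infty$ holds automatically, so no boundary term survives and $v_1$ genuinely lives on all of $\R^N$; Proposition \ref{nonexxx} then forces $v_1$ to be a Talenti function up to sign, whence $\tilde\phi_\infty(v_1)=S^{N/p}/N$. Subtracting this bubble and iterating, each step removes at least the quantum $S^{N/p}/N$ of energy, so by the bound on $c$ the process terminates after finitely many steps $k$, yielding the stated decomposition together with the energy identities $\|u_n\|^p\to\sum_{i=0}^k\|v_i\|^p$ and $\phi(v_0)+\sum_{i=1}^k\tilde\phi_\infty(v_i)=c$, both inherited from the successive Brezis--Lieb splittings.

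Finally, for the case $a\equiv0$: any weak limit $v_0$ of a subsequence is, by the second step above, a radial weak solution of $-\Delta_p u=|u|^{p^*-2}u$ in $W^{1,p}_{0,\mathrm{rad}}(\Omega)$, i.e.\ exactly the situation covered by Proposition \ref{nonexx} on the unit ball; hence $v_0\equiv0$ and $\phi(v_0)=0$. Since this argument applies to every weakly convergent subsequence, all of them converge weakly to zero. I expect the main obstacle to be the rigorous justification of the blow-up step for \emph{sign-changing} radial sequences --- in particular the almost-everywhere convergence of the rescaled gradients needed to identify $v_1$ as a genuine entire solution, and the verification that the concentration is single-scale at the origin rather than exhibiting dichotomy --- since the global compactness theory of \cite{MW} is stated only near the positive cone; the radial constraint is precisely what makes these points tractable.
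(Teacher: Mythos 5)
Your strategy coincides with the paper's: a profile decomposition for radial Palais--Smale sequences is combined with Proposition \ref{nonexxx} to identify every nontrivial bubble as a Talenti function (whence $\tilde\phi_\infty(v_i)=S^{N/p}/N$) and with Proposition \ref{nonexx} to force $v_0\equiv 0$ when $a\equiv 0$. The only difference is that the paper obtains the decomposition itself by simply citing Theorem 5.1 of \cite{MW}, which is stated for radial, possibly sign-changing sequences and already places all concentration at the origin with $\lambda_n^i\to 0$, whereas you sketch its proof from scratch; the technical points you flag as open obstacles --- the nonlinear Brezis--Lieb splitting of $\phi'$, the a.e.\ convergence of gradients, and the exclusion of dichotomy --- are precisely what that cited theorem supplies, so they need not be reproved here.
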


\begin{proof}
Let $a\equiv0.$ Since the weak limit of $\{u_n\}_n,$ $v_0,$ solves equation (\ref{radial bounded}) with $R,\mu=1,$ then by Proposition \ref{nonexx} $v_0\equiv 0.$ 
The rest of the proof follows by Theorem 5.1 in \cite{MW} and Proposition \ref{nonexxx}. In this radial setting $\tilde\phi_\infty (v_i)$ can be computed explicitly by \cite{T}, using Proposition \ref{nonexxx}.  And this concludes the proof.
\end{proof}
\noindent Now  define on $\mathcal D^{1,p}(\R^N)$

$$
\phi(u)=\int_{\R^N}\Big(\frac{|\nabla u|^p}{p}+a(x)\frac{|u|^p}{p}-\frac{|u|^{p^*}}{p^*}\Big)\textrm{d}x
$$
and
$$\tilde\phi_{\infty}(u):=\int_{\R^N}\Big(\frac{|\nabla u|^p}{p}-\frac{|u|^{p^*}}{p^*}\Big)\textrm{d}x.$$
We assume 

\bn {\bf (C)} \quad $1<p<N$, and $a\in L^{N/p}(\R^N)$ is radial such that

$$
\inf_{u\in\mathcal D_{\rm rad}^{1,p}(\R^N)\atop{||\nabla u||_{L^p}=1}}\int_{\R^N}[|\nabla u|^p+a(x)|u|^p]\textrm{d}x >0.
$$

\begin{prop} \label{radialB}
{\it Under assumption {\bf (C)}, let $\{u_n\}_n$ be a sequence in $\mathcal D_{\rm rad} ^{1,p}(\R^N)$ such that
 $$\phi(u_n)\rightarrow c \quad \quad \phi'(u_n)\rightarrow 0 \quad \textrm{in} \,\,(\mathcal D_{\rm rad} ^{1,p}(\R^N))'. $$
 Then, passing if necessary to a subsequence, there exists a possibly nontrivial solution $v_0\in\mathcal D_{\rm rad}^{1,p}(\R^N)$ to

 $$
 -\Delta_pu+a(x)|u|^{p-2}u=|u|^{p^*-2}u
 $$
 and $k$ sequences $\{\lambda^i_n\}_n \subset \R_+,$ such that $\lambda^i_n\rightarrow 0$ or $\lambda^i_n\rightarrow\infty$ satisfying

$$\|u_n-v_0-\sum^k_{i=1}(\lambda^i_n)^{(p-N)/p}v_i (\cdot/\lambda^i_n)\|\rightarrow 0, \quad n\rightarrow \infty,$$

$$\|u_n\|^p\rightarrow \sum^k_{i=0}\|v_i\|^p, \quad n\rightarrow \infty,$$
$$\phi(v_0)+\sum^k_{i=1}\tilde\phi_\infty (v_i) =c,$$
where for $i\geq1$ $v_i$ is either identically zero, or for some $\lambda>0$ and up to the sign, it holds that

\begin{equation*}
v_i \equiv   \Big[\frac{\lambda^{\frac{1}{p-1}}N^{\frac{1}{p}}(\frac{N-p}{p-1})^{\frac{p-1}{p}}} {\lambda^{\frac{p}{p-1}}+|\cdot |^{\frac{p}{p-1} }}\Big]^{\frac{N-p}{p}} ,
\end{equation*}
and $$\tilde\phi_{\infty}(v_i)=\frac{S^{N/p}}{N} .$$ Moreover if $a\equiv 0,$ then either  $v_0$ is identically zero, or it holds that for some $\lambda>0$ and up to the sign

\begin{equation*}
v_0 \equiv   \Big[\frac{\lambda^{\frac{1}{p-1}}N^{\frac{1}{p}}(\frac{N-p}{p-1})^{\frac{p-1}{p}}} {\lambda^{\frac{p}{p-1}}+|\cdot |^{\frac{p}{p-1} }}\Big]^{\frac{N-p}{p}},
\end{equation*}
and $$\phi(v_0)=\tilde\phi_{\infty}(v_0)=\frac{S^{N/p}}{N} .$$}


\end{prop}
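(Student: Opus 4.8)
The plan is to follow verbatim the strategy used for Proposition \ref{radialvariant}, replacing the bounded-domain ingredients by their whole-space counterparts. First I would invoke the radial specialisation of the blow-up/global compactness theory of \cite{MW} (Theorem 5.1 there): under assumption {\bf (C)} it produces, after passing to a subsequence, a weak limit $v_0\in\mathcal D_{\rm rad}^{1,p}(\R^N)$ solving $-\Delta_p u+a(x)|u|^{p-2}u=|u|^{p^*-2}u$, together with finitely many radial profiles $v_i$ and scales $\lambda_n^i$ for which the stated decomposition of $u_n$, the norm splitting $\|u_n\|^p\to\sum_{i=0}^k\|v_i\|^p$, and the energy splitting hold. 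These are the structural outputs of the machinery and I would take them as given.

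The core step is then the identification of the profiles, which is where Proposition \ref{nonexxx} enters. Each $v_i$, $i\geq1$, is realised as a weak limit of the dilations $(\lambda_n^i)^{(N-p)/p}u_n(\lambda_n^i\,\cdot)$. Under this rescaling the potential term of the equation becomes $(\lambda_n^i)^p a(\lambda_n^i\,\cdot)$; although its $L^{N/p}$ norm is scale-invariant, its contribution against any fixed test function disperses as $\lambda_n^i\to0$ or $\lambda_n^i\to\infty$ (one passes to the limit using the local strong convergence $|w_n|^{p-2}w_n\to|v_i|^{p-2}v_i$ together with the weak vanishing of the rescaled potential in $L^{N/p}_{\rm loc}$). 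Hence each $v_i$ solves the dilation-invariant critical equation $-\Delta_p u=|u|^{p^*-2}u$ on $\R^N$ and is radial, so by Proposition \ref{nonexxx} it is either identically zero or, up to sign and dilation, a Talenti function; the value $\tilde\phi_\infty(v_i)=S^{N/p}/N$ follows from the explicit computation of \cite{T}.

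A feature specific to the whole space, and the reason the statement retains both alternatives $\lambda_n^i\to0$ and $\lambda_n^i\to\infty$, is that here loss of compactness can occur both by concentration at the origin and by dispersion to infinity; in the ball case of Proposition \ref{radialvariant} the Dirichlet condition on $\partial\Omega$ excludes spreading profiles and forces $\lambda_n^i\to0$. Finally, when $a\equiv0$ the functional $\phi$ coincides with $\tilde\phi_\infty$ and is itself dilation-invariant, so $v_0$ now solves $-\Delta_p u=|u|^{p^*-2}u$ radially on all of $\R^N$. Applying Proposition \ref{nonexxx} directly to $v_0$ gives that $v_0$ is either zero or, up to sign, a Talenti function, and the energy identity $\phi(v_0)=\tilde\phi_\infty(v_0)=S^{N/p}/N$ follows by the same computation; this is precisely the point at which the present statement is stronger than Proposition \ref{radialvariant}, where Proposition \ref{nonexx} instead forces $v_0\equiv0$.

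I expect the main obstacle to be the bookkeeping inside the abstract decomposition rather than the identification itself: one must verify that the lower-order potential $a|u|^{p-2}u$, with $a\in L^{N/p}$, does not contaminate the rescaled bubble equations in \emph{either} scaling regime, so that Proposition \ref{nonexxx} genuinely applies to every $v_i$. Once that dispersion is in hand, the classification of profiles and the energy evaluation are routine consequences of Proposition \ref{nonexxx} and \cite{T}.
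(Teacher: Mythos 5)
Your proposal follows essentially the same route as the paper: invoke the whole-space radial global compactness decomposition from \cite{MW} (the paper cites Theorem~5.4 there, not Theorem~5.1, which is the bounded-domain version used for Proposition~\ref{radialvariant}), then classify the profiles $v_i$ --- and, when $a\equiv 0$, also $v_0$ --- via Proposition~\ref{nonexxx}, computing the energies from \cite{T}. Apart from that minor misattribution of the theorem number, your argument matches the paper's (much terser) proof and correctly identifies the key difference from the ball case, namely that Proposition~\ref{nonexxx} rather than Proposition~\ref{nonexx} governs $v_0$.
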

\begin{proof}
It follows by Theorem 5.4 of \cite{MW} and Proposition \ref{nonexxx}. When $a\equiv 0,$ $c$ can be computed explicitly by \cite{T}, using Proposition \ref{nonexxx}.\end{proof}

\section{Finite Morse index solutions}\label{morsection}
\subsection{Bounds on the number of nodal regions and proof of Theorem \ref{morse theorem}}
Let $\mathcal {O}$ be a domain of $\mathbb R^N,$ and 
$u\in W^{1,p}_{\mathrm{loc}}(\mathcal{O})$ be such that
 \begin{equation*}
-\Delta_p u =   |u|^{p^*-2}u \quad \textrm{in}\quad \mathcal D'(\mathcal {O}).
\end{equation*}  
For all $v\in C^1_c(\mathcal O)$ define $$\phi''_\infty(u)[v,v]=\int_{\R^N}|\nabla u|^{p-2}|\nabla v|^2+(p-2)|\nabla u|^{p-4}(\nabla u,\nabla v)^2-(p^*-1)|u|^{p^*-2}|v|^2.$$
We say that $u$ has Morse index $i(u)$, see for instance \cite{DFSV, F}, if $i(u)$ is the maximal dimension of the subspaces $V$ of  $C^1_c(\mathcal O)$ such that $$\phi''_\infty(u)[v,v]<0, \qquad \mathrm{for\, all}\,\,v\in V\setminus\{0\}.$$
\begin{proof}[Proof of Theorem \ref{morse theorem}]
Define on $\mathcal D_0^{1,p}(\mathcal O)$

$$\phi_\infty(u)=\int_{\mathcal O}\Big(\frac{|\nabla u|^{p}}{p}-\frac{|u|^{p^*}}{p^*}\Big)\textrm{d}x.
$$
The linearised functional is $$\phi''_\infty(u)[v,v]=\int_{\mathcal O}|\nabla u|^{p-2}|\nabla v|^2+(p-2)|\nabla u|^{p-4}(\nabla u,\nabla v)^2-(p^*-1)|u|^{p^*-2}|v|^2.$$
Pick the sequence $(u_n)$ as given by Lemma \ref{approx} and with $u|_\omega$ being the restriction of $u$ to $\omega.$ Extend by zero outside $\omega$ $u|\omega$ and all $u_n.$ By Lemma \ref{approx} we have
$$\phi''_\infty(u)[u_n,u_n]\longrightarrow \int_{\omega}|\nabla u|^{p}+(p-2)|\nabla u|^{p}-(p^*-1)|u|^{p^*}=\int_\omega(p-1)|\nabla u|^p-(p^*-1)|u|^{p*}.$$
This and Lemma \ref{prep} yields
$$\phi''_\infty(u)[u_n,u_n]\longrightarrow(p-p^*)\int_\omega|u|^{p*}<0.$$
This means that for every nodal domain $\omega$
there exists a direction $u_n\in C^{0,1}_c(\omega)$ (and by density in $C^1_c(\omega)$) such that $$\phi''_\infty(u)[u_n,u_n]<0$$
and this concludes the proof.
\end{proof}

\subsection{Proof of Theorem \ref{Halfspace0} }

\begin{proof}[Proof of Theorem \ref{Halfspace0}]
1) If $u\equiv 0$ then $i(u)=0.$\\
Assume that $ u \not \equiv 0$ and $i(u)\leq 1.$ By Theorem \ref{morse theorem} there is exactly one nodal region, say $A.$ If $A=\R_+^N$ we have a contradiction by \cite{MW}. If $A$ is a proper subset, we can assume, up to consider $-u$ instead of $u$,  that $A = \{ u>0\}$.  
Since $u\in C^{1}(\R_+^N)$ by Lemma \ref{lem} it follows that $\nabla u= \bold {0}$ on $\partial A.$ Pick now any interior point $p\in A.$ There exists a ball $B(p,R)\subset A$ centered at $p$ for some radius $R>0$ such that $B(p,R)$ touches internally $\partial A$ at some points. Let $p'$ be such an intersection point. Since $p'$ is a boundary point and satisfies the interior sphere condition, Hopf's boundary point lemma implies that $\nabla u (p')\neq \bold{0},$ which is a contradiction. This concludes the proof of part 1).\\
2) Let now $i(u)\leq 1.$ By Theorem \ref{morse theorem} $u$ has at most a nodal domain $A.$
If $A$ is a proper subset, the preceding proof of part 1) shows that $u\equiv 0.$ Otherwise if $A\equiv \R^N$ then the conclusion follows by  the recent classification result \cite{S}. And this concludes the proof of part 2).
\end{proof}

\begin{remark}\label{Rk2} The above proof shows also that $u$ cannot have a nodal domain $A$ surrounded by a region where $u$ is identically zero.  Moreover, all nodal domains have always some boundary points satisfying an interior sphere condition. \\We observe that in the case of (\ref{hh}) an alternative way to conclude the proof is by the strong maximum principle \cite{V}.
\end{remark}

\subsection{Starshaped domains: Proof of Theorem \ref{bddstar}}

\begin{proof} [Proof of Theorem \ref{bddstar} ]
By a refinement of Moser's iteration, see e.g. Appendix E of \cite{Peral} and \cite{Tr}, $u\in L^{\infty}(\Omega).$ By classical regularity results of DiBenedetto \cite{Di} and Liebermann \cite{Li}, we have that $u\in C^{1,\alpha}(\overline{\Omega}).$ By Theorem 1.1 of \cite{GV} it holds that the normal derivative $u_\nu=0$ at some point $x_0\in \partial \Omega.$ By Theorem \ref{morse theorem} $u$ has at most one nodal region. If $u$ were nontrivial, this would be in contrast with Hopf's boundary point lemma, see \cite{V}. It follows that $u\equiv0,$ and this concludes the proof.
 \end{proof}





\end{document}